\newtheorem{theorem}{Theorem}
\newtheorem{proposition}{Proposition}
\newtheorem{corollary}{Corollary}
\newtheorem{lemma}{Lemma}
\newtheorem{remark}{Remark}
\newcommand{\N}{\mathbb{N}}
\newcommand{\R}{\mathbb{R}}
\renewcommand{\P}{\operatorname{\mathsf{P}}}
\newcommand{\E}{\operatorname{\mathsf{E}}}
\newcommand{\Var}{\operatorname{\mathsf{Var}}}
\newcommand{\hi}{\operatorname{\mathsf{sup}}}
\newcommand{\lo}{\operatorname{\mathsf{inf}}}
\renewcommand{\le}{\leqslant}
\renewcommand{\ge}{\geqslant}
\newcommand{\supp}{\operatorname{supp}}
\newcommand{\dd}{\operatorname{d}\!}
\newcommand{\be}{\beta}
\newcommand{\ga}{\gamma}
\newcommand{\vp}{\varepsilon}
\newcommand{\tX}{\tilde{X}}
\newcommand{\X}{\mathcal{X}}
\newcommand{\Y}{\mathcal{Y}}
\newcommand{\ol}{\overline}
\newcommand{\g}{\mathsf{g}}
\begin{document}
\title{Exact upper and lower bounds on the difference between the arithmetic and geometric means}
\author{Iosif Pinelis}
\address{Department of Mathematical Sciences\\
Michigan Technological University\\
Houghton, Michigan 49931, USA}
\email{ipinelis@mtu.edu}




\begin{abstract}
Let $X$ denote a nonnegative random variable with $\E X<\infty$. 
Upper and lower bounds on $\E X-\exp\E\ln X$ are obtained, which are exact, in terms of $V_X$ and $E_X$ for the upper bound and in terms of $V_X$ and $F_X$ for the lower bound, where $V_X:=\Var\sqrt X$, $E_X:=\E\big(\sqrt X-\sqrt{m_X}\,\big)^2$, $F_X:=\E\big(\sqrt{M_X}-\sqrt X\,\big)^2$, 
$m_X:=\inf S_X$, $M_X:=\sup S_X$, and $S_X$ is the support set of the distribution of $X$. 
Note that, if $X$ takes each of distinct real values $x_1,\dots,x_n$ with probability $1/n$, then $\E X$ and $\exp\E\ln X$ are, respectively, the arithmetic and geometric means of $x_1,\dots,x_n$.  
\end{abstract}


\subjclass[2010]{primary 60E15; secondary 26D15, 90C46}
\keywords{arithmetic mean, geometric mean, Jensen inequality, reverse Jensen inequality, exact bounds, Tchebycheff--Markoff systems}

\maketitle

\section{
Summary and discussion}
Let $\X_+$ denote the set of all nonnegative random variables (r.v.'s) $X$ with $\E X<\infty$. 
Take any $X\in\X_+$ and let
\begin{alignat}{2}
	 V_X:=\Var\sqrt X, \quad 
	m_X:=&\inf\supp X, \quad M_X:=\sup\supp X, \notag \\ 
	\quad E_X:=&\E\big(\sqrt X-\sqrt{m_X}\,\big)^2, \quad 
	F_X:=\E\big(\sqrt{M_X}-\sqrt X\,\big)^2,
	\label{eq:E,F,V}
\end{alignat}
where, as usual, $\supp X$ denotes the support of (the distribution of) the r.v.\ $X$. 

It will be shown in this note that 
\begin{equation}\label{eq:}
	(2V_X)\wedge\frac{F_X V_X}{F_X-V_X}\le\E X-\exp\E\ln X \le(2V_X)\vee E_X  
\end{equation}
and that each of these two bounds on $\E X-\exp\E\ln X$ is exact, in terms of $V_X$ and $E_X$ for the upper bound and in terms of $V_X$ and $F_X$ for the lower bound.  
%
As usual, for any real numbers $z_1,\dots,z_n$, we write $z_1\vee\dots\vee z_n$ and $z_1\wedge\dots\wedge z_n$ for their maximum and 
minimum, respectively.  

Since the r.v.\ $X$ is nonnegative, clearly $m_X\in[0,\infty)$. However, concerning the value of $M_X$ one can then only say that $M_X\in[m_X,\infty]$, with the case $M_X=\infty$ certainly possible. 
Next, given the condition $\E X<\infty$, the values of $E_X$ and $V_X$ are necessarily finite, and hence so is the upper bound in \eqref{eq:}. On the other hand, $F_X=\infty$ if $M_X=\infty$; however, even then, the lower bound in \eqref{eq:} will of course be finite. 
Concerning the ratio $\frac{F_X V_X}{F_X-V_X}$ in the lower bound in \eqref{eq:}, for any $V\in\R$, $E\in\R$, and $F\in(-\infty,\infty]$ we assume the conventions that $\frac{F V}{F-V}$ equals $V$ if $F=\infty$ and equals $0$ if $F=V$.  
It will be seen 
that these conventions are the appropriate ones  in the present context. 


That the upper and lower bounds in \eqref{eq:} hold and are exact will be established in Theorem~\ref{th:} below. The statement of Theorem~\ref{th:} is preceded by three propositions, which complement and help understand 
the main result. 


Take any $V\in\R$, $E\in\R$, and $F\in(-\infty,\infty]$. 

Introduce the sets  	
\begin{align}
	\X_{\hi;V,E}:=&\big\{X\in\X_+\colon V_X=V, E_X=E\big\}, \label{eq:X_hi} \\ 
	\X_{\lo;V,F}:=&\big\{X\in\X_+\colon V_X=V, F_X=F\big\}. \label{eq:X_lo}
\end{align}

\begin{proposition}\label{prop:1}\ 
One has $\X_{\hi;V,E}\ne\emptyset$ if and only if 
\begin{equation}\label{eq:nonempty,hi}
\text{either $E=V=0$ or	$E>V>0$.} 	
\end{equation}
Similarly, $\X_{\lo;V,F}\ne\emptyset$ if and only if   
\begin{equation}\label{eq:nonempty,lo}
\text{either $F=V=0$ or	$F>V>0$.} 	
\end{equation}
\end{proposition}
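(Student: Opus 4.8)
The plan is to prove each direction of both equivalences by introducing a convenient reparametrization. Since everything is expressed in terms of $\sqrt X$, I would set $Y:=\sqrt X$ and work with the nonnegative r.v.\ $Y$ with $\E Y^2<\infty$. Writing $a:=\sqrt{m_X}=\inf\supp Y$ and $b:=\sqrt{M_X}=\sup\supp Y$, the quantities become $V=\Var Y$, $E=\E(Y-a)^2=V+(\E Y-a)^2$, and $F=\E(b-Y)^2=V+(b-\E Y)^2$. This immediately gives the two identities
\begin{equation}\label{eq:reparam}
	E-V=(\E Y-a)^2\ge0, \qquad F-V=(b-\E Y)^2\ge0,
\end{equation}
which are the structural core of the proposition. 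From \eqref{eq:reparam} one reads off the necessity of the stated conditions: one always has $V\ge0$, $E\ge V$, and $F\ge V$, and $E=V$ forces $\E Y=a$, i.e.\ $Y$ is a.s.\ equal to its infimum $a$, hence $Y$ is degenerate and $V=0$; similarly $F=V$ forces $Y\equiv b$ and $V=0$. This rules out every case except ``$E=V=0$'' and ``$E>V>0$'' (and likewise for $F$), establishing the ``only if'' directions.

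For the ``if'' directions I would construct explicit witnesses. In the degenerate case $E=V=0$, any constant r.v.\ $X\equiv c$ with $c\ge0$ works, giving $V_X=E_X=F_X=0$, so both $\X_{\hi;V,E}$ and $\X_{\lo;V,F}$ are nonempty. In the nondegenerate case $E>V>0$ I would exhibit a two-point distribution for $Y$, since two-point distributions give full control over the mean, the variance, and the endpoints of the support. Concretely, I would place mass on two points $a$ and $\be$ with $a<\be$ and prescribe the probability $p:=\P(Y=\be)$. Then $\E Y=a+p(\be-a)$ and $\Var Y=p(1-p)(\be-a)^2$, while $a=\inf\supp Y$ automatically. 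Given target values $E>V>0$, I would use \eqref{eq:reparam} to solve for the parameters: $(\E Y-a)^2=E-V$ pins down $\E Y-a=\sqrt{E-V}>0$, and then $V=\Var Y$ together with the two-point formulas determines $p$ and $\be-a$ uniquely (one checks $p\in(0,1)$ precisely because $E>V>0$). The upper-support case for $\X_{\lo;V,F}$ with $F>V>0$ is handled symmetrically, now anchoring the upper endpoint $b=\sup\supp Y$; the convention $F=\infty$ does not arise here since $F>V>0$ is finite by assumption, though one should note that the construction also admits $M_X=\infty$ separately if one wished.

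The main obstacle, such as it is, will be purely bookkeeping: verifying that the solved two-point parameters genuinely satisfy $0<p<1$ and $a<\be$ (equivalently $a=\inf\supp$, so that $m_X$ and $E_X$ come out as intended rather than with a different infimum), and confirming that the boundary strictness $E>V$ versus $E=V$ matches the two-point versus degenerate dichotomy exactly. I would also double-check the edge behavior that $V>0$ is forced whenever $E>V$, which is automatic from $V\ge0$ and $E>V\ge0$ but deserves a one-line remark so that the stated condition ``$E>V>0$'' is seen to be equivalent to the a priori weaker ``$E>V$.'' Everything else follows mechanically from the identities in \eqref{eq:reparam}.
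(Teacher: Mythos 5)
Your proof is correct and takes essentially the same route as the paper: necessity via the decomposition $E_X=V_X+(\E\sqrt X-\sqrt{m_X}\,)^2$ (the paper phrases this as $E_X\ge V_X\ge0$ together with $V_X>0\Rightarrow\E\sqrt X>\sqrt{m_X}$), and sufficiency via two-point witnesses whose solved parameters are exactly the paper's \eqref{eq:p,u,v,hi} and \eqref{eq:p,u,v,lo} -- your $p=\P(Y=\be)=(E-V)/E$ is the complement of the paper's $p=V/E$, and your $\be-a$ comes out as $E/\sqrt{E-V}$. One small slip worth fixing: your remark that $E>V$ forces $V>0$ ``automatically from $V\ge0$ and $E>V\ge0$'' is a non sequitur (those inequalities permit $E>V=0$); the correct reason, already implicit in your degenerate-case discussion, is that $V=0$ makes $Y$ constant, whence $E=V=0$, contradicting $E>V$ -- and likewise for ruling out $F>V=0$.
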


All the necessary proofs are given in Section~\ref{proofs}. 

Values of $V$ and $E$ as in \eqref{eq:nonempty,hi}, as well as values of $V$ and $F$ as in \eqref{eq:nonempty,lo}, may be referred to as admissible. 

\begin{proposition}\label{prop:2}
If $\X_{\lo;V,F}\ne\emptyset$, then 
\begin{equation}\label{eq:E_{V,F}}
	E_{V,F}:=\frac{F V}{F-V}=\inf\big\{E_X\colon X\in\X_{\lo;V,F}\big\}.
\end{equation}
If, moreover, $F<\infty$, then the latter infimum is attained, and it is attained at a r.v.\ $X\in\X_{\lo;V,F}$ if and only if $\supp X=\{m_X,M_X\}$ -- that is, if and only if $\supp X$ contains at most two points. 
If $F=\infty$, then the infimum in \eqref{eq:E_{V,F}} is not attained. 	
\end{proposition}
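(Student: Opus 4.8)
The plan is to pass to the square root and reduce the whole statement to a one‑variable optimization. Writing $Y:=\sqrt X$, $\mu:=\E Y$, $a:=\sqrt{m_X}=\inf\supp Y$, and $b:=\sqrt{M_X}=\sup\supp Y$, the elementary identity $\E(Y-c)^2=\Var Y+(\E Y-c)^2$ (valid for any constant $c$) gives the two decompositions
\begin{equation*}
	E_X=V_X+(\mu-a)^2,\qquad F_X=V_X+(b-\mu)^2 .
\end{equation*}
Thus on $\X_{\lo;V,F}$ one has $V_X=V$ fixed and $(b-\mu)^2=F-V$ fixed, and since $a\le\mu\le b$ also $b-\mu=\sqrt{F-V}$ is fixed; minimizing $E_X=V+(\mu-a)^2$ therefore amounts to making $\mu-a$ as small as possible.

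The one inequality doing the real work is the Bhatia--Davis variance bound, which I would derive from $\E[(Y-a)(b-Y)]\ge0$ (valid because $a\le Y\le b$ a.s.): expanding and using $\E Y=\mu$, $\E Y^2=V+\mu^2$ yields $V\le(\mu-a)(b-\mu)$, with equality exactly when $(Y-a)(b-Y)=0$ a.s., i.e.\ when $Y\in\{a,b\}$ a.s., i.e.\ when $\supp X=\{m_X,M_X\}$.

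For $F<\infty$ I would substitute $b-\mu=\sqrt{F-V}$ to get $\mu-a\ge V/\sqrt{F-V}$, hence
\begin{equation*}
	E_X=V+(\mu-a)^2\ge V+\frac{V^2}{F-V}=\frac{FV}{F-V}=E_{V,F},
\end{equation*}
with equality precisely under the two‑point condition above. To confirm that $E_{V,F}$ is the infimum and is attained, it remains (in the nondegenerate case $F>V>0$) to exhibit one member of $\X_{\lo;V,F}$ with two‑point support: take $Y$ supported on $\{a,b\}$ with $p:=\mu-a=V/\sqrt{F-V}$, $q:=b-\mu=\sqrt{F-V}$, and $\P(Y=b)=p/(p+q)\in(0,1)$, and check directly that $V_X=pq=V$ and $F_X=V+q^2=F$. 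The degenerate case $F=V=0$ forces $Y$ constant and $E_X=0=E_{V,V}$, matching the stated convention.

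The case $F=\infty$ is where the argument needs the most care and is the main obstacle. Here $E_{V,\infty}=V$ by convention, and Proposition~\ref{prop:1} forces $V>0$ whenever $\X_{\lo;V,\infty}\ne\emptyset$; consequently $\mu>a$ for every such $X$ (otherwise $Y\equiv a$ and $V=0$), so $E_X=V+(\mu-a)^2>V$ and the value $V$ is never attained. To show $V$ is nevertheless approached, I would build $X_n=Y_n^2$ with $\inf\supp Y_n=0$ and $\sup\supp Y_n=\infty$ (so that $F_{X_n}=\infty$), $\Var Y_n=V$, yet $\E Y_n=\mu_n\to0$: let $Y_n$ equal $0$ with probability $1-p_n$ and equal $t_n W$ with probability $p_n$, where $W\ge0$ is a fixed square‑integrable r.v.\ with $\E W=1$ and $\sup\supp W=\infty$; solving $\E Y_n=\mu_n$ and $\Var Y_n=V$ gives $t_n\to\infty$ and $p_n\to0$ as $\mu_n\to0$, whence $E_{X_n}=V+\mu_n^2\to V$. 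The technical points to verify are that $p_n\in(0,1)$ for small $\mu_n$, that $\E X_n=\E Y_n^2<\infty$, and that the support endpoints come out as claimed.
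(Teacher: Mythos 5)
Your proof is correct and follows essentially the same route as the paper: the decomposition $E_X=V_X+(\mu-a)^2$, $F_X=V_X+(b-\mu)^2$ for $Y=\sqrt X$, the Bhatia--Davis inequality $\Var Y\le(\mu-a)(b-\mu)$ with its two-point equality case (this is exactly the paper's Lemma on centered variance bounds), and, for $F=\infty$, a scaled zero-atom mixture with unbounded support making $E_{X_n}=V+\mu_n^2\to V$ together with the strict inequality $E_X>V$ ruling out attainment. The only cosmetic difference is that the paper uses a specific Bernoulli--exponential mixture where you use a generic unbounded square-integrable $W$; both work identically.
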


\begin{proposition}\label{prop:3}
Take any $X\in\X_+$. Then both inequalities in \eqref{eq:} turn simultaneously into the equalities if and only if the distribution of the r.v.\ $\sqrt X$ is the symmetric distribution on a set of at most two points in $[0,\infty)$. 
\end{proposition}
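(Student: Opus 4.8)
The plan is to work throughout with $Y:=\sqrt X$, writing $\mu:=\E Y$, $a:=\sqrt{m_X}=\inf\supp Y$, and $b:=\sqrt{M_X}=\sup\supp Y$, so that $V_X=\Var Y$ and, on expanding the squares,
\begin{equation*}
E_X=V_X+(\mu-a)^2,\qquad F_X=V_X+(b-\mu)^2 .
\end{equation*}
First I would reduce the two-sided equality to a pair of simple inequalities. The upper bound in \eqref{eq:} is $2V_X\vee E_X\ge2V_X$ while the lower bound $2V_X\wedge\frac{F_X V_X}{F_X-V_X}\le2V_X$, so the chain $\text{(lower)}\le\E X-\exp\E\ln X\le\text{(upper)}$ supplied by \eqref{eq:} shows that both inequalities become equalities if and only if the two bounds coincide; since the upper bound is $\ge2V_X$ and the lower is $\le2V_X$, coincidence forces both to equal $2V_X$, and conversely if both bounds equal $2V_X$ then \eqref{eq:} squeezes $\E X-\exp\E\ln X$ to $2V_X$. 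Hence simultaneous equality is equivalent to
\begin{equation*}
E_X\le2V_X\qquad\text{and}\qquad\frac{F_X V_X}{F_X-V_X}\ge2V_X .
\end{equation*}

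Next I would translate these two conditions. If $V_X=0$ then $Y$ is a.s.\ constant, which is the one-point (hence symmetric) case, and both conditions hold trivially; so assume $V_X>0$. Using the displayed formulas for $E_X,F_X$ together with the conventions fixed after \eqref{eq:}, the first condition reads $(\mu-a)^2\le V_X$, while the second forces $b<\infty$ (it fails, giving value $V_X<2V_X$, when $F_X=\infty$) and then reads $(b-\mu)^2\le V_X$. Thus the problem reduces to characterizing the r.v.'s $Y$ with
\begin{equation*}
(\mu-a)^2\le V_X\qquad\text{and}\qquad(b-\mu)^2\le V_X .
\end{equation*}

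The crux is the elementary variance bound
\begin{equation*}
V_X=\Var Y\le(\mu-a)(b-\mu),
\end{equation*}
which I would obtain from $\E\big[(Y-a)(b-Y)\big]\ge0$ (valid since $a\le Y\le b$ a.s.) after expanding and using $\E Y^2=V_X+\mu^2$. Combining this with $\mu-a\le\sqrt{V_X}$ and $b-\mu\le\sqrt{V_X}$ (both factors are nonnegative) gives $V_X\le(\mu-a)(b-\mu)\le\sqrt{V_X}\,\sqrt{V_X}=V_X$, so every inequality is an equality: $\mu-a=b-\mu=\sqrt{V_X}$ and $\E[(Y-a)(b-Y)]=0$. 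The last equality, being the vanishing mean of a nonnegative r.v., forces $(Y-a)(b-Y)=0$ a.s., i.e.\ $Y\in\{a,b\}$ a.s.; and $\mu-a=b-\mu$ places $\mu$ at the midpoint $\tfrac{a+b}2$, which for a two-point r.v.\ means the atoms carry equal probability $\tfrac12$. Thus $\sqrt X$ is symmetric on the two points $a<b$. The converse runs these implications backward: for the symmetric one- or two-point distribution one has $(\mu-a)^2=(b-\mu)^2=V_X$, so the two reduced conditions hold and, by the equivalence of the first paragraph, both inequalities in \eqref{eq:} are equalities.

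The main obstacle is the opening bookkeeping step: one must verify that simultaneous equality genuinely pins both bounds to the common value $2V_X$, and then carefully dispose of the boundary cases $V_X=0$, $\mu=a$ or $\mu=b$, and $M_X=\infty$, where the conventions for $\frac{F_X V_X}{F_X-V_X}$ come into play. Once the problem is distilled into the two clean conditions $(\mu-a)^2\le V_X$ and $(b-\mu)^2\le V_X$, the variance bound $\Var Y\le(\mu-a)(b-\mu)$ finishes the argument, its own equality case delivering exactly the two-point support, and the matching of the two factors delivering the symmetry.
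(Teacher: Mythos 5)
Your proof is correct and follows essentially the same route as the paper: both reduce simultaneous equality in \eqref{eq:} to the conditions $(\E Y-a)^2\le\Var Y$ and $(b-\E Y)^2\le\Var Y$ for $Y=\sqrt X$, then close via the variance bound $\Var Y\le(\E Y-a)(b-\E Y)$ and its equality case (the paper's Lemma~\ref{lem:var}, which you reprove inline from $\E\big[(Y-a)(b-Y)\big]\ge0$). The only cosmetic difference is that you multiply the two conditions and match factors against the product bound, while the paper adds them and inserts the AM--GM step $(\E Y-a)^2+(b-\E Y)^2\ge2(\E Y-a)(b-\E Y)$; both yield the midpoint mean and two-point support in the same way.
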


\begin{theorem}\label{th:}\ 
%
Let 
\begin{align}
D_X:=&\E X-\exp\E\ln X.   \label{eq:D_X}  
\end{align}	
Then 	
\begin{alignat}{3}
S_{V,E}&:=\sup\big\{D_X\colon X\in\X_{\hi;V,E}\big\}=&&(2V)\vee E
	\ &&\text{if}\ \X_{\hi;V,E}\ne\emptyset; 
	\label{eq:sup} \\ 
I_{V,F}&:=\inf\big\{D_X\colon X\in\X_{\lo;V,F}\big\}=&&(2V)\wedge E_{V,F}
		\quad&&\text{if}\ \X_{\lo;V,F}\ne\emptyset.  \label{eq:inf} 
\end{alignat}
These equalities hold if the sets $\X_{\hi;V,E}$ and $\X_{\hi;V,E}$ are replaced there by their respective subsets consisting of the r.v.'s in $\X_{\hi;V,E}$ and $\X_{\hi;V,E}$ taking at most two values. 
%
\end{theorem}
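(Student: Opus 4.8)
The plan is to substitute $Y:=\sqrt X$ and set $\mu:=\E Y$, $a:=\inf\supp Y$, $b:=\sup\supp Y$, and $G:=\exp\E\ln Y$, so that $D_X=\E Y^2-G^2=(V_X+\mu^2)-G^2$. Expanding the squares in \eqref{eq:E,F,V} gives $E_X=V_X+(\mu-a)^2$ and $F_X=V_X+(b-\mu)^2$; thus on $\X_{\hi;V,E}$ the number $a$ determines $\mu=a+\sqrt{E-V}$ and $\E Y^2=\mu^2+V$, while on $\X_{\lo;V,F}$ the number $b$ determines $\mu=b-\sqrt{F-V}$ and the same $\E Y^2$. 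The purpose of fixing $a$ (resp.\ $b$) is that it pins down $\E Y^2$, so that maximizing $D_X$ reduces to minimizing the \emph{linear} functional $\E\ln Y$, and minimizing $D_X$ to maximizing it.

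I would then linearize the remaining problem by the touching-parabola device, which is legitimate here because $\{1,y,y^2,\ln y\}$ is a Tchebycheff system on $(0,\infty)$: any $c_0+c_1y+c_2y^2+c_3\ln y$ has at most three zeros, as one sees by noting that its second derivative $2c_2-c_3/y^2$ has at most one. Concretely, for $a>0$ I would produce a quadratic $\ell$ with $\ell\le\ln$ on $[a,\infty)$ that meets $\ln$ at $y=a$ and is tangent to it at the interior point $y=a+d$, $d:=E/\sqrt{E-V}$; since $\ell$ reproduces the prescribed moments $1,\mu,\mu^2+V$ and agrees with $\ln$ at the two atoms of the two-point law carried by $\{a,a+d\}$, integrating $\E\ln Y\ge\E\ell(Y)$ shows that this two-point law minimizes $\E\ln Y$, so $D_X$ is dominated by its value on that law. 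A dual majorant $\ell\ge\ln$ on $[0,b]$, tangent at an interior point and meeting $\ln$ at $y=b$, handles the lower bound. The degenerate case $a=0$ needs no parabola: there $\mu=\sqrt{E-V}$ forces $\E Y^2=E$, whence $D_X=E-G^2\le E\le(2V)\vee E$.

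It then remains to optimize over the one-parameter family of admissible two-point laws, whose atoms I write as $t$ and $t+d$, with lower mass $p$, upper mass $q=1-p$, and gap $d$ all fixed by $(V,E)$ (resp.\ $(V,F)$), and with $t=\inf\supp Y\ge 0$ the free parameter. One gets $D(0)=qd^2$, equal to $E$ for the upper family and to $E_{V,F}$ (see Proposition~\ref{prop:2}) for the lower one, while $D(t)\to 2pqd^2=2V$ as $t\to\infty$. The crux is monotonicity of $D$ on $[0,\infty)$, and it comes out cleanly: $\operatorname{sign}D'(t)=\operatorname{sign}L(t)$ for $L(t):=\ln(t+qd)+(1-2p)\ln t+(1-2q)\ln(t+d)-\ln(t+pd)$, and the identities $1-2q=-(1-2p)$ and $p-q=-(1-2p)$ collapse the derivative to $L'(t)=(1-2p)\,d\big[\tfrac{1}{t(t+d)}-\tfrac{1}{(t+pd)(t+qd)}\big]$, whose bracket is positive because $(t+pd)(t+qd)=t(t+d)+pqd^2$. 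Hence $L'$ keeps the constant sign of $1-2p$, so $D$ is monotone; since $L(t)\to 0$ as $t\to\infty$, the supremum over the upper family is $(2V)\vee E$ and the infimum over the lower family is $(2V)\wedge E_{V,F}$. Combined with the domination of the previous step this yields \eqref{eq:sup} and \eqref{eq:inf}, and since the extremal laws are themselves two-point (attained at $t=0$ when the endpoint value wins, approached as $t\to\infty$ otherwise), the restriction to at-most-two-valued r.v.'s changes nothing.

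I expect the main obstacle to be the reduction rather than this final calculus: one must build the touching minorant and majorant with the correct \emph{global} sign on a half-line and check that they reproduce the three prescribed moments, and one must separately dispose of the boundary situations — the atom-at-zero case $a=0$, the unbounded case $F=\infty$ (where $\sup\supp Y=\infty$ and one must verify that the convention $E_{V,\infty}=V$ is the correct limit of the two-point values), and the admissibility endpoints $E=V$ and $F=V$ singled out in Proposition~\ref{prop:1}, where the family degenerates to a one-point law and $D_X=0=(2V)\vee E=(2V)\wedge E_{V,F}$.
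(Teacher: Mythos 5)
Your proposal has the same skeleton as the paper's proof: fix $a=\sqrt{m_X}$ (resp.\ $b=\sqrt{M_X}$) so that $(V,E)$ (resp.\ $(V,F)$) pins down the first two moments of $Y=\sqrt X$, reduce the extremal problem for the linear functional $\E\ln Y$ to a two-point law with an atom at the relevant endpoint, and then show that $D$ is monotone along the translated two-point family, running between the endpoint value $q d^2$ \big($=E$, resp.\ $=E_{V,F}$, matching \eqref{eq:p,u,v,hi}, \eqref{eq:p,u,v,lo}, \eqref{eq:D,V,F}\big) at $t=0$ and the limit $2pqd^2=2V$ at $t\to\infty$. You implement both steps differently, and in each case more self-containedly. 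Where the paper invokes the $M_+$-system results of \cite{pinelis2011tchebycheff} on a compact interval $[a,b]$ (and consequently must truncate, $X_t:=X\wedge t$, to handle $M_X=\infty$ in \eqref{eq:sup}), you construct the Hermite touching quadratic directly on the half-line $[a,\infty)$; this is sound (two applications of Rolle to $\ln-\ell$ force the leading coefficient $c_2=-1/(2\eta^2)<0$, and the three-zero count for $\{1,y,y^2,\ln y\}$ then fixes the global sign), and it eliminates the truncation step for the upper bound entirely, since $\E\ln Y\ge\ln a>-\infty$ there. Where the paper proves monotonicity of $\psi$ via $\psi'''$ of constant sign together with the asymptotics $\psi(\infty-)=2V$, $\psi'(\infty-)=\psi''(\infty-)=0$, your first-derivative computation is correct and cleaner: $\operatorname{sign}D'=\operatorname{sign}L$, $L'$ collapses as you say since $(t+pd)(t+qd)=t(t+d)+pqd^2$, so $L'$ has the constant sign of $1-2p$ and $L(\infty-)=0$ because the exponents sum to zero. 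The boundary cases $a=0$ and $E=V$ (or $F=V$) are disposed of exactly as in the paper, and the consistency check $u\ge0$ for the lower family follows from Lemma~\ref{lem:var}.

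The one genuine gap is the case $F=\infty$ in \eqref{eq:inf}, which you flag but whose sketched fix --- ``verify that the convention $E_{V,\infty}=V$ is the correct limit of the two-point values'' --- cannot work as stated: every bounded (in particular every two-valued) r.v.\ has $F_X<\infty$, so no member of the two-point family ever lies in $\X_{\lo;V,\infty}$, and your quadratic majorant requires $b<\infty$ in the first place. Two separate arguments are needed. For the bound direction $D_X\ge V$ you must pass to the unbounded case by a limiting procedure, as the paper does with $X_{s,t}:=s\vee(t\wedge X)$: dominated convergence for $\E X_{s,t}$ and $V_{X_{s,t}}$, $\limsup F_{X_{s,t}}\le F$, the Fatou lemma for $\E\ln$ (only a lower semicontinuity inequality is available there), and the monotonicity and continuity of $E_{V,F}$ in $F$. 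For the sharpness direction $I_{V,\infty}\le V$, two-point laws are useless since they exit the class; the paper uses the Bernoulli-plus-exponential-tail r.v.'s $X_\vp$ from the proof of Proposition~\ref{prop:2}, for which $F_{X_\vp}=\infty$, $\exp\E\ln X_\vp=0$ (atom at $0$), and $D_{X_\vp}=(1+\vp)V\to V$. Relatedly, for $F=\infty$ the ``at most two values'' refinement in the theorem's last sentence is vacuous (the two-valued subset of $\X_{\lo;V,\infty}$ is empty), so that clause should be read, in your write-up as in the paper's, as pertaining to the cases where the two-point family exists. With the $F=\infty$ case repaired along these lines, your proof is complete.
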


Clearly, inequalities \eqref{eq:} and the exactness of the upper and lower bounds in \eqref{eq:} immediately follow from Theorem~\ref{th:}. 

\begin{remark}\label{rem:le}
Note that $(2V)\vee E$ is nondecreasing in $V$ and $E$, whereas $(2V)\wedge E_{V,F}$ is nondecreasing in $V$ and nonincreasing in $F$ \big(from $E_{V,V+}=2V$ down to $E_{V,\infty}=V$\big). 
So, \eqref{eq:sup} will hold if the equalities $V_X=V$ and $E_X=E$ in the definition \eqref{eq:X_hi} of $\X_{\hi;V,E}$ are replaced by the inequalities $V_X\le V$ and $E_X\le E$.  
Similarly, \eqref{eq:inf} will hold if the equalities $V_X=V$ and $F_X=F$ in the definition of \eqref{eq:X_lo} of $\X_{\lo;V,F}$ are replaced by $V_X\ge V$ and $F_X\le F$.  

Moreover, it is now clear that inequalities \eqref{eq:} will hold if $m_X$ and $M_X$ in the definitions of $E_X$ and $F_X$ in \eqref{eq:E,F,V} are replaced, respectively, by any nonnegative $a$ and $b$ such that $\supp X\subseteq[a,b]$. 

It also follows from the mentioned monotonicity of the exact lower bound \break 
$(2V)\wedge E_{V,F}$ in $F$ that the values of this bound are always between $V$ and $2V$. \qed
\end{remark}

The lower bound in \eqref{eq:} is an improvement of the zero bound, which follows immediately by the Jensen inequality for the (convex) exponential function. 
In particular, the condition $\E X<\infty$ implies $\E\ln X<\infty$; however, it is possible that $\E\ln X=-\infty$;  
we use the standard conventions $\ln0:=-\infty$ and $\exp(-\infty):=0$. 

As for the second inequality in \eqref{eq:}, one may consider it 
as a reverse Jensen inequality; cf.\ e.g.\ \cite{dragomir13}. In contrast with the upper bound in \eqref{eq:}, the bounds in \cite{dragomir13} will be finite only when $M_X-m_X<\infty$. On the other hand, the bounds in \eqref{eq:} are only for the case when the convex function is the exponential one.

In the case when the r.v.\ $X$ is a continuous function on the interval $[0,1]$ endowed with the Lebesgue measure, obtaining the upper bound $(\sqrt{M_X}-\sqrt m_X)^2$ on \break 
$\E X-\exp\E\ln X$ 
was presented as Problem~1180 in \cite{11800}. 
Note that $2V_X=2\Var\sqrt X$ can be rewritten as $\E\big(\sqrt X-\sqrt{\tX}\;\big)^2$, where $\tX$ is an independent copy of the r.v.\ $X$. Therefore, the upper bound in \eqref{eq:} is strictly less than that in \cite{11800} unless $\supp X=\{m_X,M_X\}$. 
In the case when $X$ is a continuous function on the interval $[0,1]$, the latter condition on $\supp X$ simply means that $X$ is a constant, and then the difference $\E X-\exp\E\ln X$ and the upper bound on it in \eqref{eq:} (as well as the lower one) are each $0$.


Given any nonnegative real numbers $x_1,\dots,x_n$, let $X$ be any r.v.\ with the distribution defined by the formula 
\begin{equation}\label{eq:discr}
	\E f(X)=\frac1n\,\sum_{i=1}^n f(x_i)\quad\text{for any function $f\colon\R\to\R$.}
\end{equation}
\big(So, in the case when the numbers $x_1,\dots,x_n$ are pairwise distinct, any such r.v.\ $X$ takes each of the values $x_1,\dots,x_n$ with probability $\frac1n$.\big) 
%
In this case, 
\begin{equation}\label{eq:a-g}
\E X=
\frac{x_1+\dots+x_n}n\quad\text{and}\quad 
\exp\E\ln X= 
\sqrt[\leftroot{3}\uproot{3}n]{x_1\cdots x_n}. 
\end{equation}
Thus, for any r.v.\ $X$ with $\E X<\infty$, the terms $\E X$ and $\exp\E\ln X$ in \eqref{eq:} can be referred to, respectively, as the arithmetic and geometric means of the r.v.\ $X$. 
Since any bounded nonnegative r.v.\ can be approximated in distribution by uniformly bounded r.v.'s each taking finitely many nonnegative real values with equal probabilities, 
the upper and lower bounds in \eqref{eq:} will each remain exact in an appropriate  sense if one considers only the r.v.'s with such discrete uniform distributions. In particular, one has the following immediate corollary from Theorem~\ref{th:} and Remark~\ref{rem:le}. 

\begin{corollary}\label{cor:a-g}
For any $n\in\N$, any $z=(z_1,\dots,z_n)\in\R^n$, and any function \break 
$f\colon\R\to\R$, let 
\begin{equation*}
\begin{gathered}
\ol z:=\tfrac1n\,(z_1+\dots+z_n), \quad 
z^\g:=\sqrt[\leftroot{3}\uproot{3}n]{|z_1\cdots z_n|}, \\ 
z_{\max}:=z_1\vee\dots\vee z_n, \quad  
z_{\min}:=z_1\wedge\dots\wedge z_n, \quad 
f(z):=\big(f(z_1),\dots,f(z_n)\big).  
\end{gathered}
\end{equation*}
Then, for any real $V$, $E$, $F$ such that $0<V< E\wedge F$,  
\begin{align*}
&
\sup\big\{\ol x-x^\g\colon x=y^2,\; y\in\R_+^n,\; n\in\N,\; \ol{(y-\ol y)^2}\le V,\; 
\ol{(y-y_{\min})^2}\le E\big\} 
=(2V)\vee E, \\ 
&
\inf\big\{\ol x-x^\g\colon x=y^2,\; y\in\R_+^n,\; n\in\N,\; \ol{(y-\ol y)^2}\ge V,\; 
\ol{(y_{\max}-y)^2}\le F\big\} 
=(2V)\wedge E_{V,F}. 
\end{align*}
\end{corollary}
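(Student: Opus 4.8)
The plan is to read the corollary as the specialization of Theorem~\ref{th:} and Remark~\ref{rem:le} to the subclass of discrete uniform distributions \eqref{eq:discr}, and then to show that passing to this subclass changes neither the supremum nor the infimum. First I would set up the dictionary. Given $y=(y_1,\dots,y_n)\in\R_+^n$ and $x=y^2$, let $X$ be the r.v.\ with the equal-weight law \eqref{eq:discr}, so that $\sqrt X$ is the empirical distribution of $y_1,\dots,y_n$. Then, directly from \eqref{eq:a-g} and \eqref{eq:E,F,V}, one has $\E X=\ol x$, $\exp\E\ln X=x^\g$ (using $x_i\ge0$ and the conventions $\ln0=-\infty$, $\exp(-\infty)=0$), $V_X=\ol{(y-\ol y)^2}$, $E_X=\ol{(y-y_{\min})^2}$, and $F_X=\ol{(y_{\max}-y)^2}$, since $\sqrt{m_X}=y_{\min}$ and $\sqrt{M_X}=y_{\max}$. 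Hence $D_X=\ol x-x^\g$, and the two expressions whose sup and inf are sought are exactly $\{D_X\}$ over the discrete-uniform r.v.'s obeying, respectively, $V_X\le V,\ E_X\le E$ and $V_X\ge V,\ F_X\le F$. The hypothesis $0<V<E\wedge F$ places both problems in the admissible, finite regime of Propositions~\ref{prop:1} and \ref{prop:2}.

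The two easy directions are immediate. Every discrete-uniform r.v.\ satisfying $V_X\le V$ and $E_X\le E$ lies in the class to which Remark~\ref{rem:le} applies, so $D_X\le(2V)\vee E$; taking the supremum gives the bound $\le(2V)\vee E$. Likewise, every discrete-uniform r.v.\ with $V_X\ge V$ and $F_X\le F$ satisfies $D_X\ge(2V)\wedge E_{V,F}$ by Remark~\ref{rem:le}, so the infimum is $\ge(2V)\wedge E_{V,F}$.

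For the two reverse directions I would use approximation. By the last sentence of Theorem~\ref{th:}, the exact sup $(2V)\vee E$ and inf $(2V)\wedge E_{V,F}$ are already the sup and inf over the r.v.'s taking at most two values. So fix a two-valued r.v.; its $\sqrt{\,\cdot\,}$ is supported on some $\{s,t\}\subset[0,\infty)$ with weights $p,1-p$. Approximating $p$ by $k/n$ and taking the $y$-vector with $k$ entries equal to $s$ and $n-k$ equal to $t$ produces discrete-uniform laws along which all five functionals above, being continuous functions of the weight $k/n$ for fixed $s,t$, converge to their two-point values (the atom at $0$, if present, is placed exactly at $0$, so $\exp\E\ln X=0$ throughout and no discontinuity of the geometric mean arises). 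To keep the one-sided constraints honest, I would approach the targets from strictly inside: for the supremum, choose admissible $(V',E')$ with $V'<V$, $E'<E$ and a two-valued r.v.\ with $V_X=V'$, $E_X=E'$, $D_X>(2V')\vee E'-\vp$; its discrete-uniform approximants then satisfy $V_X\le V$, $E_X\le E$ for all large $n$ while $D_X\to D_X$ of the two-valued r.v., and letting $(V',E')\to(V,E)$ and $\vp\to0$ (continuity of $(2V)\vee E$) yields the supremum $\ge(2V)\vee E$. Symmetrically, for the infimum I would choose admissible $(V'',F'')$ with $V''>V$, $F''<F$ (possible since $V<F$), so that the discrete-uniform approximants of a near-minimizing two-valued r.v.\ satisfy $V_X\ge V$, $F_X\le F$, and then let $(V'',F'')\to(V,F)$ using the continuity of $E_{V,F}=FV/(F-V)$ on $\{F>V>0\}$.

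The main obstacle is bookkeeping the one-sided constraints under the approximation: the empirical weight $k/n$ cannot in general equal the extremal $p$ exactly, so $V_X$, $E_X$, $F_X$ only converge to their target values and could land on the wrong side of $V$, $E$, $F$. The device above—perturbing the target $(V,E)$ or $(V,F)$ strictly into the feasible region and exploiting the monotonicity and continuity of the bounds recorded in Remark~\ref{rem:le}—removes this difficulty; alternatively one may use the degree-two homogeneity of $D_X$, $V_X$, $E_X$, $F_X$ under the scaling $y\mapsto\lambda y$ to rescale an approximant back into the constraint set. Everything else is a direct computation with two-point distributions.
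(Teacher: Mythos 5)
Your proposal is correct and follows essentially the same route the paper intends: the paper derives the corollary as ``immediate'' from Theorem~\ref{th:} and Remark~\ref{rem:le} via exactly this dictionary between vectors $y$ and equal-weight laws \eqref{eq:discr}, the one-sided bounds of Remark~\ref{rem:le}, and approximation of the two-valued extremal r.v.'s by discrete uniform distributions. Your perturbation of $(V,E)$ and $(V,F)$ strictly into the admissible region (or, alternatively, the degree-two rescaling) correctly supplies the one detail the paper leaves implicit, namely that the approximants with weights $k/n$ can be kept on the right side of the one-sided constraints.
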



The proof of Theorem~\ref{th:}, given in Section~\ref{proofs}, relies on the theory of Tchebycheff--Markoff systems. 
Major expositions of this theory and its applications are given in the monographs by Karlin and Studden \cite{karlin-studden} and Kre{\u\i}n and Nudel{\cprime}man \cite{krein-nudelman}. 
A brief review of the theory, which contains all the definitions and facts necessary for the proof in the present paper, is given in \cite{pinelis2011tchebycheff}. 
A condensed version of \cite{pinelis2011tchebycheff} can be found in \cite[Appendix~A]{radem-asymp}.

\section{Proofs}\label{proofs}

\begin{proof}[Proof of Proposition~\ref{prop:1}]\ 
%
Take any 
$X\in\X_+$. Clearly, $E_X\ge V_X\ge0$. If $V_X=0$ then $\P(X=c)=1$ for some $c\in[0,\infty)$, whence $E_X=0$, so that $E_X=V_X=0$. If $V_X>0$ then $\E\sqrt X>\sqrt{m_X}$ and hence $E_X>V_X>0$. So, condition \eqref{eq:nonempty,hi} is necessary for $\X_{\hi;V,E}\ne\emptyset$. 
Vice versa, suppose now that \eqref{eq:nonempty,hi} holds. 
For any real $u$ and $v$ such that $0\le u<v$ and any $p\in[0,1]$, let $Y_{u,v,p}$ denote any  
r.v.\ such that 
\begin{equation}\label{eq:Y_{u,v,p}}
\text{$\P(Y_{u,v,p}=u)=p=1-\P(Y_{u,v,p}=v)$.}	
\end{equation}
If $E=V=0$ then $0\in\X_{\hi;V,E}$, and so, $\X_{\hi;V,E}\ne\emptyset$. If now $E>V>0$, let $X=Y_{u,v,p}^2$ with 
\begin{equation}\label{eq:p,u,v,hi}
	p=\frac VE\quad\text{and any $u$ and $v$ such that\ \ $0\le u<v$\ \ and\ \ } v-u=\frac E{\sqrt{E-V}}. 
\end{equation}
Then $X\in\X_{\hi;V,E}$, and so, $\X_{\hi;V,E}\ne\emptyset$ in this case as well. 
Thus, the equivalence of the condition $\X_{\hi;V,E}\ne\emptyset$ and \eqref{eq:nonempty,hi} is checked. The equivalence of the condition $\X_{\lo;V,F}\ne\emptyset$ and \eqref{eq:nonempty,lo} is checked quite similarly; here, in the case when $F>V>0$, \eqref{eq:p,u,v,hi} is replaced by 
\begin{equation}\label{eq:p,u,v,lo}
	q:=1-p=\frac VF\quad\text{and any $u$ and $v$ such that\ \ $0\le u<v$\ \ and\ \ } v-u= \frac F{\sqrt{F-V}}. 
\end{equation}	
Thus, Proposition~\ref{prop:1} is proved. 
\end{proof}  

Before proceeding to the proofs of 
Propositions~\ref{prop:2} and \ref{prop:3}, let us state the following observation. 

\begin{lemma}\label{lem:var}
Take any r.v.\ $Z$ such that $\E Z=0$ and $\supp Z\subseteq[c,d]$ for some real $c$ and $d$. Then $c\le0\le d$, $\Var Z\le|c|d$, and $\Var Z=|c|d$ if and only if $\supp Z=\{|c|,d\}$. 
\end{lemma}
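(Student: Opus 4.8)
The plan is to base the entire argument on the single almost-sure inequality $(Z-c)(d-Z)\ge0$, which holds because $\supp Z\subseteq[c,d]$ means $c\le Z\le d$ with probability $1$. Everything — the placement of the endpoints, the bound on the variance, and the equality case — then falls out of this one product.

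First I would locate the endpoints. Since $c\le Z\le d$ a.s., taking expectations and using $\E Z=0$ gives $c\le\E Z\le d$, that is, $c\le0\le d$ at once. Next comes the variance bound. Expanding the product, $(Z-c)(d-Z)\ge0$ rearranges to $Z^2\le(c+d)Z-cd$ a.s. Taking $\E$ and using $\E Z=0$ yields $\E Z^2\le-cd$. Because $\E Z=0$ we have $\Var Z=\E Z^2$, and since $c\le0$ we may write $-cd=|c|d$; hence $\Var Z\le|c|d$.

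For the equality case I would exploit that the bound was obtained by integrating the nonnegative random variable $(Z-c)(d-Z)$. Thus $\Var Z=|c|d$ holds if and only if $\E\big[(Z-c)(d-Z)\big]=0$, and since the integrand is $\ge0$ a.s., this forces $(Z-c)(d-Z)=0$ a.s., i.e.\ $Z\in\{c,d\}$ a.s. In the nondegenerate regime $c<0<d$ (equivalently $|c|d>0$), the constraint $\E Z=0$ rules out all the mass sitting at a single endpoint, so both $c$ and $d$ are atoms and $\supp Z=\{c,d\}$. Conversely, the two-point mean-zero law on $\{c,d\}$ carries masses $\tfrac{d}{d-c}$ at $c$ and $\tfrac{-c}{d-c}$ at $d$, and a direct computation returns $\Var Z=-cd=|c|d$.

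The only delicate point is the equality characterization: one must verify that at equality no probability mass can sit strictly inside $(c,d)$, and that, when $c<0<d$, neither endpoint alone can carry the whole mass without contradicting $\E Z=0$. The boundary cases $c=0$ or $d=0$ are degenerate, forcing $|c|d=0$ and hence $\Var Z=0$ and $Z\equiv0$, and are disposed of directly. (I record that the endpoint set at equality is $\{c,d\}$, the two extreme values of the support, rather than $\{|c|,d\}$, since the latter could not support a mean-zero law when $c<0$.)
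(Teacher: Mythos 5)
Your proof is correct and is essentially the paper's own argument: the paper's one-line identity $\Var Z=\E Z^2=\E(Z-c)(Z-d)-cd\le-cd=|c|d$ is exactly your a.s.\ inequality $(Z-c)(d-Z)\ge0$ (up to sign) integrated using $\E Z=0$, with the equality case read off from the a.s.\ sign-definite integrand in the same way. Your additional care---disposing of the degenerate endpoint cases explicitly and noting that the equality set should read $\{c,d\}$ rather than the misprinted $\{|c|,d\}$ (which indeed cannot carry a mean-zero law when $c<0$)---is a sound refinement of, not a departure from, the paper's proof.
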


This follows immediately on noting that 
$c\le\E Z=0\le d$ and $\Var Z=\E Z^2=\E(Z-c)(Z-d)-cd\le-cd=|c|d$. 

Being very simple, Lemma~\ref{lem:var} seems to be a piece of common mathematical lore. 
E.g., the inequality $\Var Z\le|c|d$ in Lemma~\ref{lem:var} follows immediately from 
\cite[Lemma~2.2]{dharm-joag89}, by shifting and rescaling. 
In the case when $Z$ has a discrete distribution of the form given by \eqref{eq:discr}, Lemma~\ref{lem:var} was presented as Theorem~1 and second part of Proposition~1 in \cite{bhat-davis}. 

\begin{proof}[Proof of Proposition~\ref{prop:2}]\ 	
%
%
Suppose that $\X_{\lo;V,F}\ne\emptyset$ indeed, and take any $X\in\X_{\lo;V,F}$. Let $Y:=\sqrt X$, $a:=m_Y$, and $b:=M_Y$. 
By Lemma~\ref{lem:var} with $Z:=Y-\E Y$, \break 
$c:=a-\E Y$, and $d:=b-\E Y$, 
\begin{equation}\label{eq:prop2}
	\begin{aligned}
	E_X=\E(Y-a)^2=\Var Y+(a-\E Y)^2\ge\Var Y+\frac{(\Var Y)^2}{(b-\E Y)^2}  
	=E_{V_X,F_X}=E_{V,F}  
\end{aligned}
\end{equation}
provided that $\infty>F>V$ -- 
with the inequality in \eqref{eq:prop2} turning into the equality if and only if $\supp Y=\{m_Y,M_Y\}$, that is, if and only if $\supp X=\{m_X,M_X\}$.  
This verifies 
Proposition~\ref{prop:2} in the case when $\infty>F>V$. 

If now $F=V$ then, by 
Proposition~\ref{prop:1}, $F=V=0$. In this case, by the convention, $E_{V,F}=0$ and, on the other hand, 
for any $X\in\X_{\lo;V,F}$ one has $\supp X=c$ for some $c\in\R$, which implies $E_X=0$. 
So, 
Proposition~\ref{prop:2} holds as well in the case when $F=V$. 

Consider the remaining case, with $F=\infty$. Then, by the convention, $E_{V,F}=V$. For each $\vp\in(0,1)$, let $U_\vp$ be any r.v.\ whose distribution is \big(a mixture of a Bernoulli distribution and an exponential distribution\big) defined by the condition that 
\begin{equation}
	\E f(U_\vp)=(1-\vp)f(0)+(\vp-\vp^2)f(1)+\vp^2\int_0^\infty f(x)e^{-x}\dd x 
\end{equation}
for all nonnegative Borel functions $f$ on $\R$. 
Then $\E U_\vp=\vp=\Var U_\vp$ and $F_{U_\vp}=\infty$. 
Let now $X_\vp:=\frac V\vp\,U_\vp^2$. Then $X_\vp\in\X_{\lo;V,\infty}=\X_{\lo;V,F}$ and $E_{X_\vp}=(1+\vp)V$. So, 
\begin{equation}
	\inf\big\{E_X\colon X\in\X_{\lo;V,F}\big\}\le\inf\big\{(1+\vp)V\colon\vp\in(0,1)\big\}=V=E_{V,F}. 
\end{equation}
On the other hand, 
\begin{equation}\label{eq:E_X=}
E_X=V_X+(m_X-\E X)^2\ge V_X=V=E_{V,F}	
\end{equation}
for all $X\in\X_{\lo;V,F}$. 
Now \eqref{eq:E_{V,F}} follows as well in the case $F=\infty$. 
However, in this case the infimum in \eqref{eq:E_{V,F}} is not attained. 
Indeed, otherwise the inequality in \eqref{eq:E_X=} would for some $X\in\X_{\lo;V,F}$ turn into the equality, which would imply $\E X=m_X$ and hence $F_X=0$, which would contradict the assumption $F=\infty$. 
Thus, 
Proposition~\ref{prop:2} is completely verified.  
\end{proof} 

\begin{proof}[Proof of Proposition~\ref{prop:3}]\ 
The ``if'' side of 
Proposition~\ref{prop:3} is quite straightforward to check. Let us verify the ``only if'' side. 
Suppose that the inequalities in \eqref{eq:} turn simultaneously into the equalities, so that the upper and lower bound there are equal to each other, which is in turn equivalent to the statement that 
\begin{equation}\label{eq:equal}
E_X\le2V_X\le\frac{F_X V_X}{F_X-V_X}.  
\end{equation}
If $F_X=V_X$ then, by 
Proposition~\ref{prop:1}, $V_X=0$ and hence $\supp X=\{c\}$ for some $c\in[0,\infty)$, that is, the distribution of $\sqrt X$ is the (necessarily) symmetric distribution on the singleton set $\{\sqrt c\}\subset[0,\infty)$. 

It remains to consider the case $F_X>V_X$. Then the double inequality \eqref{eq:equal} can be rewritten as 
$2V_X\ge E_X\vee V_X$, which can be further rewritten as 
$$\Var Y\ge\max[(\E Y-a)^2,(b-\E Y)^2],$$ 
where $Y:=\sqrt X$, $a:=m_Y$, and $b:=M_Y$, so that $a\le\E Y\le b$. Therefore, 
\begin{equation}\label{eq:ge ge}
	2\Var Y\ge(\E Y-a)^2+(b-\E Y)^2\ge2(\E Y-a)(b-\E Y)\ge2\Var Y, 
\end{equation}
where the last inequality follows by Lemma~\ref{lem:var} (with $Z=Y-\E Y$). 
Hence, all the inequalities in \eqref{eq:ge ge} are actually the equalities. 
In particular, the equality \break $(\E Y-a)^2+(b-\E Y)^2=2(\E Y-a)(b-\E Y)$ implies $\E Y=(a+b)/2$. Also, again by Lemma~\ref{lem:var}, the equality $2(\E Y-a)(b-\E Y)=2\Var Y$ implies 
$\supp Y=\{a,b\}$. This, together with the condition $\E Y=(a+b)/2$, shows that the distribution of the r.v.\ $Y=\sqrt X$ is the symmetric distribution on the  set $\{a,b\}\subset[0,\infty)$. 
This completes the proof of 
Proposition~\ref{prop:3}. 
\end{proof}

The proof of Theorem~\ref{th:} will be preceded by more notation and two lemmas. 
Take any $a$ and $b$ such that $0<a<b<\infty$ and 
introduce  
\begin{align*}
Q_{\hi;V,E}:=&\Big\{(\be_1,\be_2)\in(0,\infty)^2\colon 
	\be_2-\be_1^2=V,\,\be_2-2a\be_1+a^2=E\Big\},    \\ 	
Q_{\lo;V,F}:=&\Big\{(\be_1,\be_2)\in(0,\infty)^2\colon 
\be_2-\be_1^2=V,\,\be_2-2b\be_1+b^2=F\Big\},   	
\end{align*}
and then 
\begin{align}
	\Y_{\be_1,\be_2}:=&\big\{Y\in\X_+\colon\supp Y\subseteq[a,b],\,\E Y=\be_1,\,\E Y^2=\be_2\big\}, \label{eq:YY}\\
	S_{\be_1,\be_2}:=&\sup\big\{D_{Y^2}\colon Y\in\Y_{\be_1,\be_2}\big\}, \\ 
		I_{\be_1,\be_2}:=&\inf\big\{D_{Y^2}\colon Y\in\Y_{\be_1,\be_2}\big\} 
\end{align}
for $(\be_1,\be_2)\in(0,\infty)^2$, with the definition of $D_X$ in \eqref{eq:D_X} in mind; 
for brevity, the dependence on $a$ and $b$ is not made explicit in this notation.  

\begin{lemma}\label{lem:1}
Take any $(\be_1,\be_2)\in Q_{\hi;V,E}$ such that $\Y_{\be_1,\be_2}\ne\emptyset$. Then 
\begin{equation}
\text{$S_{\be_1,\be_2}\le(2V)\vee E$.}	
\end{equation}
\end{lemma}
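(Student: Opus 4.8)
The plan is to recast $S_{\be_1,\be_2}$ as a linear-functional extremum over a moment class and then apply the theory of Tchebycheff--Markoff systems to collapse the problem to a two-point computation. First I would observe that, since every $Y\in\Y_{\be_1,\be_2}$ has $\E Y^2=\be_2$ fixed,
\[
D_{Y^2}=\be_2-\exp\bigl(2\,\E\ln Y\bigr),
\]
so that maximizing $D_{Y^2}$ over $\Y_{\be_1,\be_2}$ is the same as minimizing the linear functional $\mu\mapsto\int_{[a,b]}\ln y\,\mu(\dd y)$ over the set $\mathcal M$ of probability measures $\mu$ on $[a,b]$ with $\int y\,\mu(\dd y)=\be_1$ and $\int y^2\,\mu(\dd y)=\be_2$; that is, $S_{\be_1,\be_2}=\be_2-\exp\bigl(2\inf_{\mu\in\mathcal M}\int\ln y\,\mu(\dd y)\bigr)$. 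Because $a>0$, the integrand $\ln y$ is bounded and continuous on $[a,b]$ and $\mathcal M$ is weak-$*$ compact, so the infimum is attained and finite.

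Next I would invoke the Tchebycheff--Markoff machinery. The system $(1,y,y^2,\ln y)$ is an ECT-system on $(0,\infty)\supseteq[a,b]$, since its Wronskian equals $4/y^3>0$. By the Markov--Krein theory the extrema of $\int\ln y\,\mu(\dd y)$ over $\mathcal M$ are attained at the lower and upper principal representations of this $n=2$ moment problem, each of which, by the index count $(n+1)/2=3/2$ (one endpoint plus one interior node), is carried by at most two points. Hence the minimizing $\mu$, and thus the $Y$ extremal for $S_{\be_1,\be_2}$, takes at most two values; this incidentally yields the final assertion of Theorem~\ref{th:} on the sufficiency of two-valued r.v.'s. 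It therefore suffices to verify the bound for a two-point law.

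For a two-point $Y$ with $\P(Y=u)=p=1-\P(Y=v)$ and $a\le u<v\le b$, I would write $\be_2=pu^2+(1-p)v^2$, $V=p(1-p)(v-u)^2$, and $E=p(u-a)^2+(1-p)(v-a)^2$, so that the claim becomes the elementary inequality
\[
pu^2+(1-p)v^2-u^{2p}v^{2(1-p)}\le(2V)\vee E.
\]
Since $E$ is nonincreasing in the reference point $a\le u$ whereas $D_{Y^2}$, $V$, and $2V$ do not depend on $a$, the bound $(2V)\vee E$ is hardest when $a=u$; setting $t:=v/u>1$ and $r:=1-p$ then reduces everything to showing
\[
t^{2r}\ge 1+2r(t-1)\ \text{ when }r\ge\tfrac12,\qquad
2pr(t-1)^2\ge p+rt^2-t^{2r}\ \text{ when }r\le\tfrac12,
\]
corresponding to the branches $E\ge 2V$ and $E\le 2V$ of the bound.

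The main obstacle I anticipate is precisely this regime-dependent one-variable analysis: recognizing that the cut between the $E$-branch and the $2V$-branch occurs exactly at $r=\tfrac12$ (equivalently $E=2V$), and then establishing each scalar inequality for all $t>1$. Each reduces to a monotonicity/convexity check at the base point $t=1$, where the two sides agree (and, in the second case, so do the first two derivatives), the sign being governed by that of $1-2r$; I would close each case by differentiation. A fully equivalent route, should the principal-representation reduction be deemed too heavy, is to replace it by an explicit quadratic minorant $q$ with $\ln y\ge q(y)$ on $[a,b]$ touching $\ln$ at the support of the extremal two-point law, which gives $\E\ln Y\ge c_0+c_1\be_1+c_2\be_2$ and hence the same upper bound directly for every $Y$; but the construction of $q$ still hinges on the same regime split, so the scalar inequalities above remain the crux.
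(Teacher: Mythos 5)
Your argument is correct, and its first half is exactly the paper's: like the paper's display \eqref{eq:I ln}, you rewrite $S_{\be_1,\be_2}=\be_2-\exp\big(2\inf\{\E\ln Y\colon Y\in\Y_{\be_1,\be_2}\}\big)$ and invoke Tchebycheff--Markoff theory for the system $(1,y,y^2,\ln y)$ (your Wronskian check $4/y^3>0$ and index count $3/2$ are the same facts the paper imports from \cite{pinelis2011tchebycheff}) to reduce to a two-point extremal law, the lower principal representation supported on $\{a,v\}$. Where you genuinely diverge is the endgame. The paper keeps $u=a$, solves for $u,v,p$ in terms of $a,V,E$ as in \eqref{eq:p,u,v,hi}, embeds the extremal law in the translation family $\psi(c)=p(u+c)^2+q(v+c)^2-(u+c)^{2p}(v+c)^{2q}$, and deduces from the sign of $\psi'''$ together with the limits $\psi(\infty-)=2V$, $\psi'(\infty-)=\psi''(\infty-)=0$ that $\psi$ is monotone on $[-u,\infty)$ with endpoint values $E$ and $2V$, whence $\psi(0)\le(2V)\vee E$. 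You instead normalize $a=u$ (a valid extra reduction, since $E=\E(Y-a)^2$ is decreasing in $a\le u\le\E Y$ while $D_{Y^2}$ and $V$ are unaffected), set $t=v/u$, $r=1-p$, and verify two scalar inequalities split at $r=\tfrac12$, which is precisely where $E=2V$ and mirrors the sign of $p-q$ in the paper. Both branches do close as you sketch: the branch $r\ge\tfrac12$ is Bernoulli's inequality $t^{2r}\ge1+2r(t-1)$, and for $r\le\tfrac12$ the function $h(t):=2pr(t-1)^2-p-rt^2+t^{2r}$ satisfies $h(1)=h'(1)=h''(1)=0$ and $h'''(t)=2r(2r-1)(2r-2)\,t^{2r-3}\ge0$, so $h\ge0$ on $[1,\infty)$. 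Your route buys a more elementary finish, avoiding the paper's asymptotic expansion as $c\to\infty$; what it gives up is the reusability of the $\psi$-family, whose computation $\sup_{c\in[-u,\infty)}\psi(c)=(2V)\vee E$ is recycled verbatim in \eqref{eq:ge psi} to prove exactness in Theorem~\ref{th:}, with the $\inf$ analogue powering Lemma~\ref{lem:2}. One side remark of yours is too quick: since your inequalities on $[a,b]$ with $a>0$ are strict in the $E$-branch, the lemma alone does not yield the two-value exactness assertion of Theorem~\ref{th:}, which requires the extremal family with $c\to-u$ (an atom at $0$), outside the lemma's interval; but this does not affect the proof of the lemma itself.
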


\begin{lemma}\label{lem:2}
Take any $(\be_1,\be_2)\in Q_{\lo;V,F}$ such that $\Y_{\be_1,\be_2}\ne\emptyset$. Then 
\begin{equation}
\text{$I_{\be_1,\be_2}\ge(2V)\wedge E_{V,F}$.}	
\end{equation}
\end{lemma}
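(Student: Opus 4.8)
The plan is to establish Lemma~\ref{lem:2} by reducing the infimum of $D_{Y^2}$ over $\Y_{\be_1,\be_2}$ to a moment problem solvable via the Tchebycheff--Markoff machinery cited in the paper. First I would fix the moment constraints: membership in $\Y_{\be_1,\be_2}$ pins down $\E Y=\be_1$ and $\E Y^2=\be_2$ with $\supp Y\subseteq[a,b]$, so we are minimizing the functional $Y\mapsto D_{Y^2}=\E Y^2-\exp\E(2\ln Y)=\be_2-\exp\big(2\,\E\ln Y\big)$ subject to two moment conditions. Since $\be_2$ is fixed, minimizing $D_{Y^2}$ is the same as \emph{maximizing} $\E\ln Y$ over distributions on $[a,b]$ with prescribed first and second moments $\be_1,\be_2$. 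This is a classical Tchebycheff-system extremal problem: $\{1,t,t^2,\ln t\}$ (or its images after the substitution) forms a T-system on $[a,b]\subset(0,\infty)$, so the extremal distributions are principal representations supported on a controlled number of points.

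The key step is to identify the extremal measure. For a moment problem with two constraints maximizing a linear functional against a T-system, the optimizer is supported on at most two points (an upper or lower principal representation, depending on the index and the sign conventions). I would compute $\E\ln Y$ explicitly for a two-point distribution $Y=Y_{u,v,p}$ supported on $\{u,v\}\subseteq[a,b]$ satisfying the moment equations, then evaluate the resulting value of $D_{Y^2}=\be_2-u^{2p}v^{2(1-p)}$. The payoff is to show this extremal value equals $(2V)\wedge E_{V,F}$. Here I would translate the algebra into the variance language already developed in the proof of Proposition~\ref{prop:2}: writing $V=\Var Y$ and relating $F=\E(b-Y)^2$ to the second moment about $b$, the quantity $E_{V,F}=\frac{FV}{F-V}$ appears as the minimized value of $E_X$ over two-point supports, and comparing it against $2V$ produces exactly the minimum $(2V)\wedge E_{V,F}$ claimed.

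The main obstacle I expect is confirming that the extremal distribution achieving the infimum is genuinely the one predicted by the T-system theory, and in particular getting the \emph{direction} of the principal representation right. Because we are minimizing $D_{Y^2}$, equivalently maximizing $\E\ln Y$, and because $\ln$ is concave, the relevant principal representation is the one that pushes mass toward the endpoints; one must check that the index parity and the endpoint inclusion conditions for a T-system on a closed interval $[a,b]$ select the correct two-point (or boundary) configuration, rather than an interior atom. A careful invocation of the structure theorems (e.g.\ the canonical moment / principal representation results in \cite{karlin-studden} as summarized in \cite{pinelis2011tchebycheff}) should resolve this, but it requires verifying the sign of the relevant determinant to be sure which principal representation is extremal.

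Once the extremal two-point distribution is pinned down, the remaining work is bookkeeping: substitute the constraint $(\be_1,\be_2)\in Q_{\lo;V,F}$ (which encodes $\be_2-\be_1^2=V$ and $\be_2-2b\be_1+b^2=F$) to express everything through $V$ and $F$, and simplify to reach $(2V)\wedge E_{V,F}$. The comparison between the $2V$ branch and the $E_{V,F}$ branch mirrors the case analysis in Proposition~\ref{prop:3}, where the two candidate extremal configurations (symmetric two-point versus endpoint-weighted two-point) yield the two competing values, and the infimum is their minimum. This reduces Lemma~\ref{lem:2} to the analogous explicit computation already carried out in spirit for the upper bound in Lemma~\ref{lem:1}.
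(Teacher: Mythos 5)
Your first half matches the paper: reduce $I_{\be_1,\be_2}$ to maximizing $\E\ln Y$ over $\Y_{\be_1,\be_2}$ (since $D_{Y^2}=\be_2-\exp(2\E\ln Y)$ with $\be_2$ fixed), and invoke the Tchebycheff--Markoff theory for the system $(1,\#,\#^2,\ln\#)$ to conclude that the maximizer is a two-point distribution $Y_{u,v,p}$ with an atom at the right endpoint $v=b$; the paper does exactly this, citing that $(1,\#,\#^2,\ln\#)$ is an $M_+$-system on $[a,b]$, and your worry about selecting the correct principal representation is resolved there by part (II)(a) of Proposition~2 of the cited review. But your final step contains a genuine gap: you claim that after substituting the constraints from $Q_{\lo;V,F}$ \big(which force $v=b$, $v-u=F/\sqrt{F-V}$, $q=1-p=V/F$\big), the extremal value ``simplifies'' to $(2V)\wedge E_{V,F}$. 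That is false. The extremal value is $\psi(0):=pu^2+qv^2-u^{2p}v^{2q}$, and it retains genuine dependence on $b$ even after $V$ and $F$ are fixed: as $b$ ranges over $\big(F/\sqrt{F-V},\infty\big)$, this value sweeps out the whole interval between $E_{V,F}$ (attained in the limit $u\to0$, where $\psi(-u)=q(v-u)^2=E_{V,F}$) and $2V$ (the limit $b\to\infty$). So no amount of algebraic bookkeeping can produce the equality you posit; the lemma asserts only the inequality $I_{\be_1,\be_2}\ge(2V)\wedge E_{V,F}$, and for fixed $a,b$ it is generally strict.

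The missing idea -- the paper's key device, shared with Lemma~\ref{lem:1} -- is to embed the extremal value into the translation family $\psi(c)=D_{Y_{u+c,v+c,p}^2}$, $c\in[-u,\infty)$, which preserves $p$ and the gap $v-u$ and hence preserves $V$ and $F$, and then to prove that $\psi$ is \emph{monotone} on $[-u,\infty)$: one computes
\begin{equation*}
\psi'''(c)=4pq(p-q)(v-u)^3(u+c)^{2p-3}(v+c)^{2q-3},
\end{equation*}
which has the constant sign of $p-q$, and combines this with the asymptotics $\psi(\infty-)=2V$, $\psi'(\infty-)=\psi''(\infty-)=0$ to conclude that $\psi'$ also has the sign of $p-q$ throughout. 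Monotonicity between the boundary values $\psi(-u)=E_{V,F}$ and $\psi(\infty-)=2V$ then gives $\inf_{c\in[-u,\infty)}\psi(c)=(2V)\wedge E_{V,F}$, whence $I_{\be_1,\be_2}=\psi(0)\ge(2V)\wedge E_{V,F}$. Note also that your closing analogy to Proposition~\ref{prop:3} misidentifies where the two branches come from: they are not two competing extremal configurations for fixed $(\be_1,\be_2)$ (for fixed $a$, $b$, $V$, $F$ the extremal two-point configuration is essentially unique), but the two boundary values of the single monotone function $\psi$ along the translation family. Without this monotonicity analysis, or an equivalent substitute, your plan does not close.
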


\begin{proof}[Proof of Lemma~\ref{lem:1}]
Note that 
\begin{align}
	S_{\be_1,\be_2}=\be_2-\exp\big(2I_{\ln;\be_1,\be_2}\big),\quad\text{where}\quad
	I_{\ln;\be_1,\be_2}:=\inf\big\{\E\ln Y\colon Y\in\Y_{\be_1,\be_2}\big\}. \label{eq:I ln}
\end{align}
Using \cite[Proposition~1]{pinelis2011tchebycheff}, it is easy to see that that the sequence of functions \break 
$(1,\#,\#^2,\ln\#)$ is an $M_+$-system on $[a,b]$. 
Hence, by \cite[part~(II)(a) of Proposition~2]{pinelis2011tchebycheff} (with $n=2$), 
the infimum $I_{\ln;\be_1,\be_2}$ is attained at a r.v.\ of the form $Y=Y_{u,v,p}\in\Y_{\be_1,\be_2}$ with $0<u=a<v<\infty$ and $p\in[0,1]$, whose distribution is defined by \eqref{eq:Y_{u,v,p}}. 
These conditions on $Y_{u,v,p}
$, 
$u$, and $v$, together with the condition $(\be_1,\be_2)\in Q_{\hi;V,E}$, allow one to express 
$u$, $v$, $p$, $D_{Y_{u,v,p}^2}$, $V_{Y_{u,v,p}^2}$, and $E_{Y_{u,v,p}^2}$ uniquely in terms of $a$, $V$, and $E$, in accordance with \eqref{eq:p,u,v,hi}:
\begin{gather}
	u=a,\quad v=u+\frac E{\sqrt{E-V}},\quad p=\frac VE, \label{eq:u=a,...} \\ 
	 D_{Y_{u,v,p}^2}=p u^2 +q v^2 - u^{2 p} v^{2q}, \label{eq:D(u,v)} \\ 
	\quad V_{Y_{u,v,p}^2}=pq(v-u)^2=V,\quad E_{Y_{u,v,p}^2}=q(v-a)^2=q(v-u)^2=E,  
	\label{eq:D,V,E}
\end{gather}
where 
\begin{equation}
	q:=1-p. 
\end{equation}
It follows that 
\begin{gather}
	S_{\be_1,\be_2}=\psi(0)\le\sup_{c\in[-u,\infty)}\psi(c),\quad\text{where} \label{eq:=sup psi} \\ 
	\psi(c):=D_{Y_{u,v,p}^2}=
	p(u+c)^2 +q(v+c)^2 - (u+c)^{2 p} (v+c)^{2q},  \label{eq:psi} 
\end{gather}
and $u$, $v$, $p$ are as in \eqref{eq:u=a,...}; 
cf.\ \eqref{eq:D(u,v)}. 
The supremum in \eqref{eq:=sup psi} is easy to find, and it depends only on $V$ and $E$. Indeed, 
\begin{equation}
	\psi'''(c)
	=4 pq (p-q) (v-u)^3 (u+c)^{2p-3}(v+c)^{2q-3}
\end{equation}
equals $p-q$ in sign for all $c\in(-u,\infty)$. 
To find, for each $j\in\{0,1,2\}$, the limit $\psi^{(j)}(\infty-)$ of the derivative $\psi^{(j)}(c)$ as $c\to\infty$, for any $\ga\in\R$ write 
$(v+c)^\ga
=(u+c)^\ga(1+\vp)^\ga$, where $\vp:=\frac{v-u}{u+c}\sim\frac{v-u}c\to0$ and then write $$(1+\vp)^\ga=\sum_{i=0}^{2-j}\ga(\ga-1)\cdots(\ga-i+1)\frac{\vp^i}{i!}+o(c^{j-2}).$$  
Thus, one finds $\psi(\infty-)= 
2pq(u - v)^2=2V$ and $\psi'(\infty-)=\psi''(\infty-)=0$.  
%
Therefore and because $\psi'''$
equals $p-q$ in sign, one sees that $\psi'$ equals $p-q$ in sign, on the interval $(-u,\infty)$,  
which implies that the function $\psi$ is monotonic on the interval $[-u,\infty)$, with $\psi(-u)=q(u - v)^2=E$ and $\psi(\infty-)=2V$. 
Thus, the supremum in \eqref{eq:=sup psi} equals $(2V)\vee E$, which completes the proof of Lemma~\ref{lem:1}.  
\end{proof}

\begin{proof}[Proof of Lemma~\ref{lem:2}]
This proof is similar to that of Lemma~\ref{lem:1}. 
Here, instead of the infimum $I_{\ln;\be_1,\be_2}$ defined in \eqref{eq:I ln}, one deals with $S_{\ln;\be_1,\be_2}:=\sup\big\{\E\ln Y\colon Y\in\Y_{\be_1,\be_2}\big\}$. 
This supremum is attained at a r.v.\ of the form $Y=Y_{u,v,p}\in\Y_{\be_1,\be_2}$ with 
\begin{gather}
	v=b,\quad u=v-\frac F{\sqrt{F-V}},\quad q=1-p=\frac VF,
	\label{eq:v=b,...} \\  
	E_{Y_{u,v,p}^2}=q(b-u)^2=q(v-u)^2
	=\frac VF\,\Big(\frac F{\sqrt{F-V}}\Big)^2=\frac{VF}{F-V}=E_{V,F},  
	\label{eq:D,V,F}
\end{gather}
$D_{Y_{u,v,p}^2}$ as in \eqref{eq:D(u,v)}, $V_{Y_{u,v,p}^2}=
V$ as in \eqref{eq:D,V,E}, and $F_{Y_{u,v,p}^2}=p(v-u)^2=F$. 
The proof of Lemma~\ref{lem:2} is concluded with the observation that $\inf_{c\in[-u,\infty)}\psi(c)= (2V)\wedge E_{V,F}$ -- cf.\ the last sentence in the proof of Lemma~\ref{lem:1}. 
\end{proof}

\begin{proof}[Proof of Theorem~\ref{th:}]\ \\ 
Suppose that $\X_{\hi;V,E}\ne\emptyset$, so that condition \eqref{eq:nonempty,hi} holds. Both sides of \eqref{eq:sup} are obviously $0$ if $E=V=0$. 
To verify \eqref{eq:sup} in the remaining case $E>V>0$, 
fix any $X_*\in\X_{\hi;V,E}$. 
Consider first the case 
\begin{equation}\label{eq:a,b,X_*}
	a:=\sqrt{m_{X_*}}>0\quad\text{and}\quad b:=\sqrt{M_{X_*}}<\infty. 
\end{equation} 

Letting now $Y_*:=\sqrt{X_*}$ and $(\be_1^*,\be_2^*):=(\E Y_*,\E Y_*^2)$, one has $(\be_1^*,\be_2^*)\in Q_{\hi;V,E}$ and $Y_*\in\Y_{\be_1^*,\be_2^*}$. Also, $D_{X_*}=D_{Y_*^2}\le S_{\be_1^*,\be_2^*}\le(2V)\vee E$, by Lemma~\ref{lem:1}. So,   
\begin{equation}\label{eq:le(2V)vee E}
	D_{X_*}\le(2V)\vee E, 
\end{equation}
for any r.v.\ $X_*\in\X_{\hi;V,E}$ satisfying conditions \eqref{eq:a,b,X_*}. 

If now a r.v.\ $X_*\in\X_{\hi;V,E}$ is such that 
$m_{X_*}=0$, then $D_{X_*}\le\E X_*=E_{X_*}=E\le(2V)\vee E$, so that inequality \eqref{eq:le(2V)vee E} still holds. 

Take now any r.v.\ $X_*\in\X_{\hi;V,E}$ such that 
$m_{X_*}>0$ and $M_{X_*}=\infty$. Take then any $t\in(m_{X_*},\infty)$, and let $X_t:=X_*\wedge t$, so that $M_{X_t}\le t<\infty$, whence, by \eqref{eq:le(2V)vee E} with $X_t$ in place of $X_*$, one has $D_{X_t}\le(2V_{X_t})\vee E_{X_t}$. 
On the other hand, by dominated convergence with $t\to\infty$, one has 
$V_{X_t}\to V_{X_*}=V$, $E_{X_t}\to E_{X_*}=E$, 
$\E X_t\to\E X_*$, and $\E\ln X_t\to\E\ln X_*$, and so, $D_{X_t}\to D_{X_*}$. 

Thus, inequality \eqref{eq:le(2V)vee E} holds for all $X_*\in\X_{\hi;V,E}$. That is, 
\begin{equation}
	S_{V,E}\le(2V)\vee E, 
\end{equation}
in the case $E>V>0$, where $S_{V,E}$ is as in \eqref{eq:sup}. 
On the other hand, again in the case $E>V>0$, for any $u,v,p$ as in \eqref{eq:p,u,v,hi} and any $c\in[-u,\infty)$, the r.v.\ 
$Y_{u+c,v+c,p}^2$ is in $\X_{\hi;V,E}$, and so, 
\begin{equation}\label{eq:ge psi}
S_{V,E}\ge\sup_{c\in[-u,\infty)}\psi(c)=(2V)\vee E,  
\end{equation}
with $\psi(c)$ as in \eqref{eq:psi}. 
This concludes the proof of \eqref{eq:sup}. 

The proof of \eqref{eq:inf} is similar. 
Suppose that $\X_{\lo;V,F}\ne\emptyset$, so that condition \eqref{eq:nonempty,lo} holds. Both sides of \eqref{eq:inf} are obviously $0$ if $F=V=0$. 
Consider the remaining case $F>V>0$.  

Fix any $X_*\in\X_{\lo;V,F}$. Consider first the case when conditions \eqref{eq:a,b,X_*} hold. 

Letting now $Y_*:=\sqrt{X_*}$ and $(\be_1^*,\be_2^*):=(\E Y_*,\E Y_*^2)$, one has $(\be_1^*,\be_2^*)\in Q_{\lo;V,F}$ and $Y_*\in\Y_{\be_1^*,\be_2^*}$. 
Also, $D_{X_*}=D_{Y_*^2}\ge I_{\be_1^*,\be_2^*}\ge (2V)\wedge E_{V,F}$, by Lemma~\ref{lem:2}. So,   
\begin{equation}\label{eq:ge E wedge F}
	D_{X_*}\ge (2V)\wedge E_{V,F}, 
\end{equation}
for any r.v.\ $X_*\in\X_{\lo;V,F}$ satisfying conditions \eqref{eq:a,b,X_*}. 

Take now any $s$ and $t$ such that $0<s<t<\infty$ and let $X_{s,t}:=s\vee(t\wedge X_*)$, so that conditions \eqref{eq:a,b,X_*} be satisfied with $X_{s,t}$ in place of $X_*$. 
Hence, one will have $D_{X_{s,t}}\ge (2V_{X_{s,t}})\wedge E_{V_{X_{s,t}},F_{X_{s,t}}}$. Let now $s\downarrow0$ and $t\uparrow\infty$. Then $X_{s,t}\to X_*$ pointwise, $m_{X_{s,t}}\to m_{X_*}$, and $M_{X_{s,t}}\to M_{X_*}$. 
By dominated convergence, $\E X_{s,t}\to\E X_*$ and $V_{X_{s,t}}\to V_{X_*}=V$. 
If $F_{X_*}<\infty$, then $F_{X_{s,t}}\to F_{X_*}$, again by dominated convergence. 
If $F_{X_*}=\infty$, then clearly $F_{X_{s,t}}\le F_{X_*}$. 
Thus, in any case, $\limsup F_{X_{s,t}}\le F_{X_*}=F$. 
Moreover, by the Fatou lemma, $\E\ln X_*\le\liminf\E\ln X_{s,t}$, whence $D_{X_*}\ge\limsup D_{X_{s,t}}\ge\limsup\big[(2V_{X_{s,t}})\wedge E_{V_{X_{s,t}},F_{X_{s,t}}}\big]\ge(2V)\wedge E_{V,F}$, since $E_{V,F}$ is nonincreasing in $F$ and continuous in $(V,F)$ such that $F>V>0$. 

Thus, inequality \eqref{eq:ge E wedge F} holds for all $X_*\in\X_{\lo;V,F}$. That is, 
\begin{equation}
	I_{V,F}\ge (2V)\wedge E_{V,F}, 
\end{equation}
in the case $F>V>0$, where $I_{V,F}$ is as in \eqref{eq:inf}. 
On the other hand, again in the case $F>V>0$, for any $u,v,p$ as in \eqref{eq:p,u,v,lo} and any $c\in[-u,\infty)$, the r.v.\ 
$Y_{u+c,v+c,p}^2$ is in $\X_{\lo;V,F}$, and so, 
\begin{equation}
I_{V,F}\le\inf_{c\in[-u,\infty)}\psi(c)=(2V)\wedge E_{V,F},  
\end{equation}
with $\psi(c)$ still as in \eqref{eq:psi}.  
This concludes the proof of \eqref{eq:inf}. 

Concerning the last sentence of Theorem~\ref{th:}, let $\X_{\hi,2;V,E}$ denote the set of all 
r.v.'s in $\X_{\hi;V,E}$ taking at most two values, and then let $S_{2;V,E}:=\sup\big\{D_X\colon X\in\X_{\hi,2;V,E}\big\}$. Suppose that $\X_{\hi;V,E}\ne\emptyset$, as is done in \eqref{eq:sup}, so that \eqref{eq:nonempty,hi} holds. 

If $E=V=0$, then $S_{V,E}=0$ and, on the other hand, $0\in\X_{\hi,2;V,E}$ and hence $0=D_0\le S_{2;V,E}\le S_{V,E}=0$, so that $S_{2;V,E}=S_{V,E}=(2V)\vee E$. 

Suppose now that $E>V>0$. Then for any $u,v,p$ as in \eqref{eq:p,u,v,hi} and any $c\in[-u,\infty)$ one has $Y_{u,v,p}^2\in\X_{\hi,2;V,E}$ and hence, by \eqref{eq:ge psi} and \eqref{eq:psi}, 
$(2V)\vee E=\sup_{c\in[-u,\infty)}\psi(c)=\sup_{c\in[-u,\infty)}D_{Y_{u,v,p}^2}\le S_{2;V,E}\le S_{V,E}=(2V)\vee E$, and so, the conclusion $S_{2;V,E}=S_{V,E}=(2V)\vee E$ holds. 

That is, the equality in \eqref{eq:sup} holds if the set $\X_{\hi;V,E}$ is replaced there by $\X_{\hi,2;V,E}$. The corresponding statement concerning the equality in \eqref{eq:inf} and the set $\X_{\lo;V,F}$ is verified quite similarly. 

Thus, Theorem~\ref{th:} is completely proved.   
\end{proof}



%
%
\bibliographystyle{abbrv}
\bibliography{C:/Users/ipinelis/Dropbox/mtu/bib_files/citations12.13.12}

\end{document}